\newcommand{\RR}{\mathbb R}
\newcommand{\QQ}{\mathbb Q}
\newcommand{\CC}{\mathbb C}
\newcommand{\NN}{\mathbb N}
\newcommand{\dlim}{{\varinjlim}}
\newcommand{\ilim}{{\varprojlim}}
\def\loc{\textup{loc}}
\def\C{\textup{C}}
\def\ev{\textup{ev}}
\def\id{\mathrm{id}}
\def\X{\mathcal X}
\def\HL{\textup{HL}}
\def\Ban{\mathtt{Ban}}
\def\K{\textup{K}}
\def\diss{\textup{diss}}
\def\prot{\hat{\otimes}}
\def\H{\textup{H}}
\def\pct{\textup{Cpt}}
\def\1{\bf{1}}
\def\sC{\text{$\sigma$-$C^*$}}
\def\an{\textup{an}}
\def\grot{\hat{\otimes}_\pi}
\def\cT{\mathcal T}
\def\cB{\mathcal{B}}
\newcommand{\map}{\rightarrow}
\def\cT{\mathcal T}
\newcommand{\beq}{\begin{eqnarray}}
\newcommand{\beqn}{\begin{eqnarray*}}
\newcommand{\eeq}{\end{eqnarray}}
\newcommand{\eeqn}{\end{eqnarray*}}
\newtheorem{thm}{Theorem}
\newtheorem{lem}[thm]{Lemma}
\newtheorem{prop}[thm]{Proposition}
\newtheorem{cor}[thm]{Corollary}
\newtheorem{rem}[thm]{Remark}
\begin{document}

\title{Continuous homotopy invariance of bivariant local cyclic homology for $\sC$-algebras}
\author{Snigdhayan Mahanta}
\email{s.mahanta@uni-muenster.de}
\address{Mathematical Institute,
University of Muenster,
Einsteinstrasse 62,
48149 Muenster, Germany}
\subjclass[2010]{Primary 46L80; Secondary 46L85, 47L40}
\thanks{This research was supported under Australian Research Council's Discovery Projects funding scheme (project number DP0878184) and ERC through AdG 267079.}

\begin{abstract}
We establish the continuous homotopy invariance of bivariant local cyclic homology on the category of all $\sC$-algebras. The argument relies vitally on an isomorphism between the smooth and the continuous cylinder constructions using a technical criterion due to Meyer. As a consequence we compute the local cyclic homology of the infinite sphere.
\end{abstract}

\maketitle
It is a widely regarded fact that $\K$-theory and (cyclic) homology theory are two pillars of noncommutative geometry via topological algebras. The study of various (co)homology theories has a rich and successful history (see, e.g., \cite{ConBook,Lod,Helemskii,MeyCycHom}). One variant of them is the $2$-periodic bivariant local cyclic homology that was used effectively to prove the Kadison--Kaplansky conjecture for torsion free word-hyperbolic groups in the sense of Gromov \cite{PusKKConj}. In this article we prove the continuous homotopy invariance of bivariant local cyclic homology on the category of all $\sC$-algebras. Let us briefly comment on the scope and applicability of $\sC$-algebras. Since the category of $\sC$-algebras contains all $C^*$-algebras, the former category can treat all locally compact Hausdorff (noncommutative) spaces. Moreover, it can treat non-locally compact countably compactly generated Hausdorff spaces and noncommutative versions thereof (see Section 5 of \cite{NCP1}). {\em Rather surprisingly, $\C(\QQ)$ is not a $\sC$-algebra (see Example 5.8. of ibid.), although $\QQ$ is a countable set with a metrizable topology.}

To any Fr{\'e}chet algebra one can apply the {\em precompact bornology} functor $\pct(-)$, which converts it into a complete bornological algebra. Then one can apply the dissection functor $\diss(-)$ to produce an ind-Banach algebra. Henceforth let us set $P(-):=\diss\circ\pct(-)$ to avoid notational clutter. Now the $\X$-complex formalism may be deployed to the (bornologically completed) analytic tensor algebra $\cT_\an(-)$ to define the bivariant local cyclic homology of two Fr{\'e}chet algebras $A,B$ as $$\HL_*(A,B)=\H_*^\loc(\X(\cT_\an(P(A))),\X(\cT_\an(P(B)))).$$ For the details concerning the rest we refer the readers to \cite{MeyCycHom}. This is how we are going to define the bivariant local cyclic homology on the category of all Fr{\'e}chet algebras following Meyer. At first sight this definition might differ from the original one of bivariant local cyclic homology by Puschnigg \cite{Puschnigg,PuschniggHL}; however, it produces a theory that is naturally isomorphic to the original one (see Theorem 5.38 of \cite{MeyCycHom}). Although we are mainly interested in the bivariant local cyclic homology of $\sC$-algebras, we need to look at the theory for the larger class of all Fr{\'e}chet algebras. This is because in the arguments below we need to consider both smooth and continuous cylinder constructions and the smooth cylinder construction takes us beyond the category of $\sC$-algebras (but keeps us within that of Fr{\'e}chet algebras).

Let us recall that a {\em locally multiplicative ind-Banach} algebra is an ind-Banach algebra, which is an inductive system of Banach algebras; whereas an arbitrary ind-Banach algebra is merely a monoid object in the symmetric monoidal category of ind-Banach spaces. A Fr{\'e}chet algebra $A$ is called {\em locally multiplicative} if its associated ind-Banach algebra $P(A)$ is locally multiplicative. It is known that bivariant local cyclic homology satisfies smooth homotopy invariance on the category of all ind-Banach algebras. This was a designing criterion in \cite{Puschnigg,PuschniggHL}; in the generality of ind-Banach algebras (as discussed above) this result is proved in Theorem 5.45 of \cite{MeyCycHom}. It is further known that bivariant local cyclic homology satisfies continuous homotopy invariance on the category of all locally multiplicative Fr{\'e}chet algebras (see Corollary 6.29 of ibid.). From the characterization of locally multiplicative Fr{\'e}chet algebras (see Theorem 3.34. of ibid.) one knows that such algebras cannot have elements with unbounded spectral radii. A $\sC$-algebra is also a Fr{\'e}chet algebra. However, it contains many elements with unbounded spectral radii, unless it is actually a $C^*$-algebra. Therefore, such a $\sC$-algebra cannot be locally multiplicative and hence the existing result on the continuous homotopy invariance is not applicable to the category of all $\sC$-algebras. 

\medskip

\noindent
{\bf Terminology and convention:} {\em In the sequel, by a Fr{\'e}chet algebra (resp. a $\sC$-algebra) we mean a complete Hausdorff locally convex $\CC$-algebra that can written as a countable inverse limit of Banach algebras (resp. $C^*$-algebras). Such Fr{\'e}chet algebras are called locally multiplicatively convex in the literature (see, for instance, \cite{MichaelLMC}); the readers should not confuse them with the locally multiplicative Fr{\'e}chet algebras described above. All bivariant local cyclic homological constructions are applied to Fr{\'e}chet algebras after applying the functor $P(-)$, which is suppressed from the notation for brevity.}

\medskip

Let $A$ be any Fr{\'e}chet algebra. A bounded (= precompact) subset $T\subset \pct(A)$ is called {\em power bounded} if $T^\infty :=\cup_{n=1}^\infty T^n$ is bounded and the {\em spectral radius} of $T$, denoted by $\varrho(T)$, is the infimum of the numbers $r\in\RR_{>0}$ for which $r^{-1}T$ is power bounded. If no such $r$ exists, then the spectral radius of $T$ is set to $\infty$. Let $(\Ban,\grot)$ denote the symmetric monoidal category of all Banach spaces with Grothedieck's projective tensor product $\grot$. Note that a partial algebra in $\Ban$ is a triple $(X,\mu,R)$, where $X,R$ are objects of $\Ban$ and $\mu:R\map X$ is a morphism therein; an injective map $R\map X\grot X$ in $\Ban$ is also a part of the data that is conveniently suppressed from the notation. Let $f: X\map B$ be a linear map, where $B$ is an ind-Banach algebra. The {\em curvature} of $f$, denoted by $\omega_f: R\map B$, is the difference of the two maps

\beqn
R\overset{\mu}{\map} X\overset{f}{\map} B \quad\text{ and }\quad R\map X\grot X\overset{f\grot f}{\longrightarrow} B\grot B\overset{\textup{mult}}{\map} B.
\eeqn

For a fixed bounded disk $S\subset R$, the linear map $f:X\map P(A):=\diss\circ\pct(A)$ is said to have {\em small curvature} with respect to $S$, if the restriction of $\omega_f$ to $S$ has {\em spectral radius} strictly less than $1$, where the {\em spectral radius} of a bounded map $g:S\map P(A)$ is simply the spectral radius of the bounded subset $g(S)$ inside $\pct(A)$. The previous assertion uses the fact that $f$ is a linear map into an ind-Banach algebra of the form $P(A)$, where $A$ is a Fr{\'e}chet algebra. Let $M(S;X,A)$ be the set of linear maps $X\map P(A)$ with small curvature with respect to $S$ and $H(S;X,A)$ be the set of smooth homotopy classes of such maps, where the homotopies must themselves be elements of $M(S;X,\C^\infty([0,1],A))$, i.e., have small curvature with respect to $S$. A homomorphism $f: A\map B$ of ind-Banach algebras is called an {\em approximate smooth local homotopy equivalence} if $\cT_\an(f):\cT_\an(A)\map\cT_\an(B)$ is a smooth local homotopy equivalence (see section 6.1.2. of \cite{MeyCycHom}). Let us recall

\begin{thm}[Meyer, Theorem 6.19. \cite{MeyCycHom}] \label{Meyer}
An algebra homomorphism $f:A\map B$ between two ind-Banach algebras is an approximate smooth local homotopy equivalence if and only if it induces isomorphisms $H(S;X,A)\map H(S;X,B)$ for all partial algebras $(X,\mu,R)$ with a fixed unit ball $S\subset R$.
\end{thm}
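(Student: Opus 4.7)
\emph{Approach.} The argument is driven throughout by the universal property of the analytic tensor algebra of a partial algebra. For $(X,\mu,R)$ with fixed bounded unit disk $S\subset R$, the analytic tensor algebra $\cT_\an(X,\mu,R;S)$ represents the functor $A\mapsto M(S;X,A)$: algebra homomorphisms $\cT_\an(X,\mu,R;S)\map A$ correspond naturally to linear maps $X\map A$ with small curvature with respect to $S$. Using the path algebra $\C^\infty([0,1],-)$ to pass to smooth homotopy classes, this lifts to a natural bijection
$$H(S;X,A)\;\cong\;[\cT_\an(X,\mu,R;S),A]_{\textup{smh}},$$
where the brackets denote smooth homotopy classes of algebra homomorphisms. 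Under this identification the theorem becomes: $\cT_\an(f)$ is a smooth local homotopy equivalence if and only if composition with $f$ is a bijection on $[\cT_\an(X,\mu,R;S),-]_{\textup{smh}}$ for every such partial algebra.

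\emph{Necessity} is essentially formal. If $\cT_\an(f)$ is a smooth local homotopy equivalence, then by its very definition it induces a bijection on smooth homotopy classes of algebra maps out of any fixed ind-Banach algebra; applying this with source $\cT_\an(X,\mu,R;S)$ and matching the target $\cT_\an(A)$ with $A$ via the canonical quotient $\sigma_A:\cT_\an(A)\map A$ --- which is a smooth deformation retract in the Cuntz--Quillen formalism --- gives the desired isomorphism on $H(S;X,-)$.

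\emph{Sufficiency} is the substantive direction, and I would construct a smooth homotopy inverse to $\cT_\an(f)$ directly from the hypothesis. Take $(X,\mu,R)=(B,\textup{mult},B\grot B)$ with $S$ the unit disk. The bijection $H(S;B,A)\cong H(S;B,B)$ applied to $\id_B\in M(S;B,B)$ produces a small-curvature linear map $g:B\map A$ with $f\circ g\simeq\id_B$ modulo smooth homotopy. The universal property promotes $g$ to an algebra homomorphism $\tilde g:\cT_\an(B)\map A$ satisfying $\cT_\an(f)\circ\tilde g$ smoothly homotopic to $\sigma_B$. A symmetric use of $H(S;A,-)$ together with the injectivity of $f_*$ produces, after a Whitehead-style argument, a two-sided smooth homotopy inverse in the localized category, so that $\cT_\an(f)$ is an approximate smooth local homotopy equivalence.

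\emph{Main obstacle.} The technical heart lies in the control of spectral radii. The small-curvature condition is not stable under arbitrary composition, so the inverse $\tilde g$ obtained from a given disk $S$ only has small curvature against a typically smaller disk; one must show that the family of bijections $H(S;X,-)$, as $S$ ranges over a cofinal system and $(X,\mu,R)$ varies, assembles into a coherent pro-system from which an honest (rather than merely approximate) inverse can be extracted. This $\varepsilon$-management, matched against the precise meaning of \emph{approximate} smooth local homotopy equivalence, is the delicate step of the argument.
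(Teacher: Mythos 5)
First, a point of order: the paper does not prove this statement at all --- it is quoted verbatim as Meyer's Theorem 6.19 from \cite{MeyCycHom} and used as a black box, so there is no in-paper proof to compare your attempt against. Judging your proposal on its own terms against what the theorem actually says, the overall architecture (universal property of the analytic tensor algebra of a partial algebra, identification of $H(S;X,A)$ with smooth homotopy classes of homomorphisms $\cT_\an(X,\mu,R;S)\map A$, then a Whitehead-style argument) is the right general shape and is indeed how Meyer's formalism is organized. But there are two genuine gaps.

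The first and most serious is your choice of test object in the sufficiency direction. The theorem quantifies over partial algebras $(X,\mu,R)$ \emph{in} $\Ban$, i.e.\ $X$ and $R$ are single Banach spaces; this is the whole content of the word ``local.'' Your candidate $(B,\textup{mult},B\grot B)$ is not such an object when $B$ is a genuine ind-Banach algebra, so the hypothesis gives you no bijection $H(S;B,A)\cong H(S;B,B)$ to apply to $\id_B$, and the construction of the homotopy inverse $\tilde g$ collapses at its first step. The correct route is to exploit that $\cT_\an(B)$ is, as an inductive system, built from analytic tensor algebras of Banach-level partial algebras (the entries of $B$ with suitably chosen unit balls), apply the hypothesis entrywise, and then assemble a morphism of inductive systems; the coherence problem you flag in your last paragraph is not an afterthought but is exactly where the proof lives. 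Second, your necessity argument asserts that a smooth local homotopy equivalence induces bijections on smooth homotopy classes of maps out of \emph{any} ind-Banach algebra ``by its very definition.'' That is not the definition: local homotopy equivalences are detected by (and induce bijections on homotopy classes of maps out of) the distinguished class of ``small''/locally projective sources, of which the $\cT_\an(X,\mu,R;S)$ are examples --- which is precisely why the theorem is stated in terms of partial algebras rather than arbitrary sources. The necessity direction is still essentially formal once this is said correctly, but as written it begs the question of what ``local'' means.
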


In the sequel we denote by $\ev^\infty_i:A^\infty[0,1] :=\C^\infty([0,1],A)\map A$ and $\ev_i: A[0,1] := \C([0,1],A)\map A$, $i=0,1$, the smooth and continuous cylinder constructions respectively with their corresponding evaluation maps. Both cylinder constructions are nicely compatible with the functor $P(-)$, provided the input is a Fr{\'e}chet algebra. Due to the nuclearity of $\C^\infty([0,1])$ as a Fr{\'e}chet space, one has $A^\infty[0,1]\cong \C^\infty([0,1])\grot A$. Observe that $A[0,1]= \C([0,1])\prot_{\sC} A$ (maximal or minimal $\sC$-tensor product) and $A^\infty[0,1]$ are both Fr{\'e}chet algebras themselves (see, for instance, Sections 1 and 2 of \cite{PhiFreK}). Recall that an algebra homomorphism $f$ between ind-Banach algebras is called an {\em $\HL$-equivalence} if $\X(\cT_\an(f))$ is a local homotopy equivalence.

\begin{lem} \label{SR}
For any $\sC$-algebra $A$, the canonical Fr{\'e}chet algebra homomorphism between the cylinder constructions $\iota: A^\infty[0,1]\map A[0,1]$ preserves spectral radii of bounded subsets as a bounded map $\iota:\pct(A^\infty[0,1])\map\pct(A[0,1])$, i.e., for any bounded subset $T\subset \pct(A^\infty[0,1])$ one has $\varrho(T)=\varrho(\iota(T))$ (both could be infinite).
\end{lem}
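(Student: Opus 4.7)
My plan is to prove each of the two inequalities separately. The bound $\varrho(\iota(T)) \leq \varrho(T)$ is immediate: since $\iota$ is a continuous algebra homomorphism and therefore bounded for the precompact bornology, if $(r^{-1}T)^\infty$ is precompact in $A^\infty[0,1]$, then $\iota((r^{-1}T)^\infty)=(r^{-1}\iota(T))^\infty$ is precompact in $A[0,1]$. All the work lies in the reverse direction.

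For the reverse inequality, I fix $r > \varrho(\iota(T))$ and show $\varrho(T) \leq r'$ for every $r' > r$. Writing $A = \varprojlim A_n$ with defining $C^*$-seminorms $p_n$, the hypothesis yields a uniform bound $\sup_t p_n(f_1(t) \cdots f_m(t)) \leq M_n r^m$ for all $m \geq 1$ and $f_i \in T$, while precompactness of $T$ in $A^\infty[0,1]$ supplies finite constants $C_{n,k} := \sup_{f\in T}\sup_t p_n(f^{(k)}(t))$ for all $n,k$. The key step is then the Leibniz rule: $(f_1\cdots f_m)^{(k)} = \sum_{|\mathbf{k}|=k}\binom{k}{\mathbf{k}}f_1^{(k_1)}\cdots f_m^{(k_m)}$, a sum of $\binom{m+k-1}{k}$ terms. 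For each term I decompose the product into maximal blocks of undifferentiated factors separated by the $s\le k$ differentiated factors; applying submultiplicativity of $p_n$ and the full-product bound to each undifferentiated block produces a per-term estimate of order $M_n^{s+1}C_{n,k}^s r^{m-s}$, and altogether $\sup_t p_n((f_1\cdots f_m)^{(k)}(t)) \leq K_{n,k}\binom{m+k-1}{k}r^m$.

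With this estimate in hand, for any $r' > r$ the quantity $(r')^{-m}\binom{m+k-1}{k} r^m = \binom{m+k-1}{k}(r/r')^m$ is of polynomial-times-geometric type and hence bounded in $m$. This yields uniform boundedness of $\{(r')^{-m}(f_1\cdots f_m)^{(k)}\}$ in $C([0,1],A_n)$ for every $n, k$; the same estimate applied with $k+1$ in place of $k$, combined with the fundamental theorem of calculus, gives (Lipschitz) equicontinuity in each $p_n$. For pointwise precompactness at $t_0$ of $\{(r')^{-m}(f_1\cdots f_m)^{(k)}(t_0)\}$ in $A_n$, I would observe that for fixed $m$ this set is a finite sum of products of precompact images of $T$ under evaluated derivative maps (and is therefore precompact by continuity of multiplication), whereas the bound above ensures that the union over $m > M$ is contained in an arbitrarily small neighborhood of zero in each seminorm. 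Arzelà--Ascoli for $A_n$-valued continuous functions then gives precompactness of $((r')^{-1}T)^\infty$ in each $C^k([0,1], A_n)$, hence in $A^\infty[0,1]$.

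The principal technical obstacle is controlling the Leibniz combinatorics: one must verify that the polynomial factor $\binom{m+k-1}{k}$ in the number of terms is uniformly absorbed by the geometric decay $(r/r')^m$, which is precisely what forces the auxiliary choice $r' > r$. It is exactly the submultiplicativity of each $C^*$-seminorm $p_n$ of the $\sigma$-$C^*$-algebra $A$---unavailable for a general Fréchet algebra---that allows the full-product bound $M_n r^m$ to be broken into per-block bounds, making the $\sigma$-$C^*$ hypothesis essential at this step.
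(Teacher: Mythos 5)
Your argument is correct in outline, but it takes a genuinely different route from the paper's. The paper proceeds softly: it picks a sequence in the closure of $T_r^\infty$ inside $A^\infty[0,1]$, extracts a subsequence converging in $A[0,1]$ from the assumed compactness there, and then argues that balls for the Fr\'echet metric of $A[0,1]$ intersected with $A^\infty[0,1]$ are contained in balls for the Fr\'echet metric of $A^\infty[0,1]$, so that the subsequence is already Cauchy upstairs; no estimates on derivatives of products appear. You instead work quantitatively: you turn the hypothesis $\varrho(\iota(T))<r$ into sub-product bounds $\sup_t p_n(f_i(t)\cdots f_j(t))\le M_n r^{j-i+1}$, feed these through the Leibniz formula to control every derivative of every word in $T$ by $K_{n,k}\cdot(\text{polynomial in }m)\cdot r^m$, and then upgrade boundedness to precompactness of $((r')^{-1}T)^\infty$ via Arzel\`a--Ascoli in each $C^k([0,1],A_n)$. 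What your approach buys is exactly the point the soft argument leaves implicit: since the topology of $A^\infty[0,1]$ is strictly finer than the subspace topology induced from $A[0,1]$, control of the $C^\infty$-seminorms by the $C^0$-seminorms can only hold on the particular set $T_{r}^\infty$, and your derivative estimates are what substantiate it. The price is the slightly weaker intermediate statement (power-boundedness at every $r'>r$ rather than at $r$ itself), which is harmless because spectral radii are infima. Two minor points to tidy up: the Leibniz count should also absorb the multinomial coefficients (bounded by $k!$, so the total is still $O(m^k)r^m$ and your geometric absorption goes through); and the submultiplicativity you invoke is already available for all Fr\'echet algebras in the paper's convention (countable inverse limits of Banach algebras), so what your proof really uses is m-convexity of the seminorms together with the concrete description of $A[0,1]$ as $C([0,1],A)$ with sup-seminorms, rather than the $C^*$-identity itself.
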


\begin{proof}
The continuous homomorphism $\iota$ gives rise to a bounded homomorphism between complete bornological algebras $\iota:\pct(A^\infty[0,1])\map\pct(A[0,1])$, from which it automatically follows that $\varrho(\iota(T))\leqslant \varrho(T)$ for any bounded subset $T\subset \pct(A^\infty[0,1])$. Let us set $T_r = r^{-1}T$. Suppose that $T_r^\infty$ viewed as a subset of $\pct(A[0,1])$ via $\iota$ is bounded, i.e., $\overline{T_r^\infty}$ is compact. We are going to show that the closure of ${T_r^\infty}$ inside $\pct(A^\infty[0,1])$ is also compact, i.e., $T_r^\infty$ viewed as a subset of $\pct(A^\infty[0,1])$ is bounded. For simplicity we drop $\pct(-)$ from the corresponding bornological algebras in the next paragraph.

Due to the metrizability of the topology of $A^\infty[0,1]$ it suffices to check sequential compactness. Given any sequence $\{a_n\}$ of elements in the closure of ${T_r^\infty}$ inside $A^\infty[0,1]$, we view them via $\iota$ as a sequence in $\overline{T_r^\infty}\subset A[0,1]$. Thanks to the compactness of $\overline{T_r^\infty}\subset A[0,1]$ we can extract a convergent subsequence $\{b_n\}\map b\in A[0,1]$. Let $d$ be the Fr{\'e}chet metric built out of the countable family $\{p_k\}_{k\in\NN}$ of submultiplicative seminorms on $A^\infty[0,1]$ as $$d(x,y)=\sum_{k=0}^\infty \frac{2^{-k}p_k(x-y)}{1+p_k(x-y)},\quad\quad \forall x,y\in A^\infty[0,1]$$ and, similarly, let $d'$ be the Fr{\'e}chet metric on $A[0,1]$. For any $\epsilon$-ball $B(d,\epsilon)\subset A^\infty[0,1]$ there is a $\delta$-ball $B(d',\delta)\subset A[0,1]$, such that $\delta<\epsilon$ and $B(d',\delta)\cap A^\infty[0,1]\subset B(d,\epsilon)$. Consequently, for any $\epsilon>0$ we may choose a $B(d',\delta)$ around $b$ with $\delta<\epsilon$, such that there is an $N\in\NN$ with $b_n\in B(d',\delta)\cap A^\infty[0,1]$ for all $n> N$. It follows that $d(b_n,b_m)<\epsilon$ for all $n,m>N$ proving the that sequence $\{b_n\}$ is Cauchy inside $A^\infty[0,1]$. By the completeness of the closure of $T_r^\infty$ inside $A^\infty[0,1]$ we conclude that it is convergent.

This analysis yields that the spectral radius of $T\subset\pct(A^\infty[0,1])$ is less than or equal to that of $\iota(T)\subset\pct(A[0,1])$ from which the assertion follows.
\end{proof}

\begin{prop}
For any $\sC$-algebra $A$, the canonical Fr{\'e}chet algebra homomorphism between the cylinder constructions $\iota: A^\infty[0,1]\map A[0,1]$ is an $\HL$-equivalence.
\end{prop}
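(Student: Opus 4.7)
The plan is to verify the criterion of Theorem~\ref{Meyer} for $\iota$: namely, for every partial algebra $(X,\mu,R)$ with a fixed unit ball $S\subset R$, the induced map
\begin{equation*}
\iota_*\colon H(S;X,A^\infty[0,1])\longrightarrow H(S;X,A[0,1])
\end{equation*}
is a bijection. This will show that $\iota$ is an approximate smooth local homotopy equivalence, and because smooth local homotopies of $\cT_\an$-tensor algebras descend to chain homotopies under the $\X$-complex functor, the desired $\HL$-equivalence follows.

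The decisive tool is Lemma~\ref{SR}. For any linear map $f\colon X\map P(A^\infty[0,1])$ one has $\omega_{\iota f}=\iota\circ\omega_f$, and since $\iota$ preserves spectral radii, $f$ has small curvature with respect to $S$ if and only if $\iota f$ does. Running the analogous argument over the smooth-homotopy parameter shows that small-curvature smooth homotopies behave equally well, so that $\iota_*$ is well-defined. Both bijectivity claims then reduce to a \emph{smoothing} question: any small-curvature linear map into $P(A[0,1])$, and any small-curvature smooth homotopy between maps that already factor through $P(A^\infty[0,1])$, can be replaced up to small-curvature smooth homotopy by something taking values in the smooth cylinder.

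To perform the smoothing I would introduce a bounded linear map $\sigma\colon A[0,1]\map A^\infty[0,1]$ given by convolution in the $[0,1]$-variable against a compactly supported smooth mollifier, with a reflection or extension near the endpoints. For surjectivity, given $g\colon X\map P(A[0,1])$ with small curvature, set $\tilde g:=\sigma\circ g$; Lemma~\ref{SR} together with the closeness of $\iota\sigma$ to $\id_{A[0,1]}$ in the precompact bornology shows that $\tilde g$ has small curvature and that a convex-combination smooth homotopy from $g$ to $\iota\tilde g$ maintains small curvature throughout. For injectivity, apply $\id\grot\sigma$ to a given small-curvature homotopy $h\colon X\map P(\C^\infty([0,1],A[0,1]))$ between $\iota f_0$ and $\iota f_1$, and patch the result near $t=0,1$ using a smooth cut-off function to recover the endpoint values $f_0,f_1$.

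The main obstacle is the injectivity step: one must smooth an existing homotopy while simultaneously preserving the small-curvature condition and the endpoint values. The boundary patching can in principle perturb the curvature, and controlling this requires keeping the spectral radius of the interpolating curvature strictly below $1$ throughout. Lemma~\ref{SR} is indispensable here, since it transports spectral-radius estimates faithfully between $\pct(A[0,1])$ and $\pct(A^\infty[0,1])$ despite the presence of unbounded spectral radii that prevent $\sC$-algebras other than $C^*$-algebras from being locally multiplicative in Meyer's sense.
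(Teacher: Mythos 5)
Your overall architecture matches the paper's: reduce via Theorem~\ref{Meyer} to bijectivity of $H(S;X,A^\infty[0,1])\map H(S;X,A[0,1])$, smooth in the cylinder variable by convolution against mollifying kernels, connect $h$ to $\iota\sigma h$ by an affine homotopy, and use Lemma~\ref{SR} to transport spectral-radius bounds back to $A^\infty[0,1]$. However, there is a genuine gap at the crux of the argument: you assert that ``a convex-combination smooth homotopy from $g$ to $\iota\tilde g$ maintains small curvature throughout'' as a consequence of closeness of $\iota\sigma$ to the identity, but this is precisely the point where real work is needed. The curvature of the affine path $H(t)=(1-t)h+t\iota\sigma h$ is quadratic in $H$, so it is not a convex combination of $\omega_h$ and $\omega_{\iota\sigma h}$; and small curvature means that $r^{-1}\omega_{H}(S)$ is \emph{power-bounded} in $\pct(\C^\infty([0,1],A[0,1]))$ for some $r<1$, a condition on all powers simultaneously. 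Since $A[0,1]$ is not locally multiplicative (the very obstruction motivating the paper), there is no submultiplicative norm from which a uniform-in-$t$ power bound follows from the two endpoint bounds. The paper resolves this by writing $A[0,1]\cong\ilim_l B_l$ with $B_l$ $C^*$-algebras, using the Heine--Borel property to enclose $\overline{(R_0^{-1}\omega_h(S))^\infty}$ in a convex ball $\cB$ whose projections have $N'_l$-th powers inside the unit balls $Q_l$, invoking uniform convergence to force $\overline{(R_1^{-1}\omega_{\iota\sigma_n h}(S))^\infty}\subset\cB$ for large $n$, and concluding compactness of $\overline{(r^{-1}\omega_{H_n}(S))^\infty}$ from the fact that an inverse limit of compact sets is compact. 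None of this mechanism appears in your proposal. A second, smaller omission of the same nature: your claim that $\iota\sigma h$ being close to $h$ forces $\varrho(\omega_{\iota\sigma h}(S))<1$ needs the continuity of the spectral radius along the approximating sequence; the paper establishes $\varrho(\omega_{\iota\sigma_n h}(S))\to\varrho(\omega_h(S))$ by a density argument through the bounded part $b(A[0,1])$, which is an honest $C^*$-algebra. (Relatedly, you should use a sequence $\sigma_n$ with shrinking kernel support rather than a single fixed mollifier, since ``sufficiently close'' depends on the gap $1-\varrho(\omega_h(S))$.)

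On injectivity your route also diverges in a way that creates extra difficulty rather than removing it: you propose to apply $\id\grot\sigma$ to the homotopy and then patch near $t=0,1$ with a cut-off to restore the endpoint values, and you yourself note that the patching perturbs the curvature without explaining how to control it. The paper avoids endpoint patching altogether: it produces smooth homotopies $H'_n$ with $\iota H'_n\to H$ uniformly, observes that $\ev^\infty_0 H'_n$ and $\ev^\infty_1 H'_n$ are already homotopic, and then reconnects $\ev^\infty_i H'_n$ to $h_i$ by the same affine device $G_n(t)=(1-t)h_i+t\ev^\infty_i H'_n$, whose small curvature is verified by the same inverse-limit/compactness estimate as in the surjectivity step. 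I would recommend adopting that reconnection instead of the cut-off, and in any case supplying the power-boundedness argument, without which the proof is incomplete.
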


\begin{proof}
By Theorem 6.11. of \cite{MeyCycHom} it suffices to show that $\iota$ is an approximate smooth local homotopy equivalence. The above Theorem \ref{Meyer} gives us a criterion to ascertain that, which we presently verify.

Let $(X,\mu,R)$ be any partial algebra in $\Ban$ and $S\subset R$ be a fixed bounded disk. For the surjectivity of the induced map $\iota: H(S;X,A^\infty[0,1])\map H(S;X,A[0,1])$ we need to find for any $h\in M(S;X,A[0,1])$ an $h'\in M(S;X,A^\infty[0,1])$, such that $\iota h'$ is homotopic to $h$ via a smooth homotopy of small curvature. Let us choose a sequence of smooth kernels $K_n(x,y)$ converging to $\delta(x-y)$. Using this sequence we produce via kernel smoothing a sequence of continuous linear maps (see, for instance, chapter 40 of \cite{Treves})
\beqn
\sigma_n:\C([0,1])&\map&\C^\infty([0,1])\subset\C([0,1])\\
f(x) &\mapsto& [x\mapsto \int K_n(x,y)f(y)dy].
\eeqn One extends them to linear maps $\sigma_n=\sigma_n\otimes\id:\C([0,1])\otimes A\map\C^\infty([0,1])\otimes A$, where $\otimes$ denotes the algebraic tensor product. This sequence can be further extended to a sequence of continuous linear maps $\sigma_n: A[0,1] \map A^\infty[0,1]$ by density arguments, such that $\iota \sigma_n: A[0,1]\map A[0,1]$ uniformly converges towards $\id$. This implies that $\iota\sigma_n h$ uniformly converges towards $h$, whence $\omega_{\iota\sigma_n h}$ uniformly converges towards $\omega_h$ (for a judicious choice of the approximation scheme via smooth kernels). The spectral radius of $\omega_{\sigma_n h}: R\map A^\infty[0,1]$ (resp. $\omega_{\iota\sigma_n h}: R\map A[0,1]$) restricted to $S$ is that of the image $\omega_{\sigma_n h}(S)$ (resp. $\omega_{\iota\sigma_n h}(S)$) inside the bornological algebra $\pct(A^\infty[0,1])$ (resp. $\pct(A[0,1])$). In the next paragraph we are going to show that $\varrho(\omega_{\sigma_n h}(S))$ is strictly less than $1$ for sufficiently large $n$, whence such a $\sigma_n h$ is a `potential candidate' for the desired $h'\in M(S;X,A^\infty[0,1])$.

The $\sC$-algebra $A[0,1]$ has a countable family of $C^*$-seminorms $\{p_n\}_{n\in\NN}$. For any $a\in A[0,1]$, set $\|a\|_\infty = \sup_n \{p_n(a)\}$ and define $b(A[0,1]):=\{a\in A\,|\, \|a\|_\infty <\infty\}$. Then $b(A[0,1])$ is a $C^*$-algebra with $\|\cdot \|_\infty$-norm and the inclusion $b(A[0,1])\hookrightarrow A[0,1]$ is continuous with dense image (see Proposition 1.11. of \cite{NCP1}). It is clear that $\varrho(\omega_{\iota\sigma_n h}(S)\cap b(A[0,1]))$ converges to $\varrho(\omega_h(S)\cap b(A[0,1]))$. By density arguments it follows that the spectral radius of $\omega_{\iota\sigma_n h}(S)$ converges to that of $\omega_h(S)$, which is strictly less than $1$ by hypothesis. From the previous Lemma \ref{SR}, we know that $\varrho(\omega_{\sigma_n h}(S))=\varrho(\omega_{\iota\sigma_n h}(S))$. Since the sequence $\{\varrho(\omega_{\iota\sigma_n h}(S))\}$ converges to a number strictly less than $1$, we conclude that $\varrho(\omega_{\sigma_n h}(S))$ is strictly less than $1$ for sufficiently large $n$.

There is an affine smooth homotopy $H_n: X\map (A[0,1])^\infty[0,1]$, where $H_n(t)=(1-t)h + t\iota\sigma_n h$, connecting the two linear maps $h$ and $\iota\sigma_n h$. In the following paragraph we show that for sufficiently large $n$ this  affine homotopy $H_n$ also has small curvature, which is equivalent to the assertion that the spectral radius of $\omega_{H_n}(S)$ is strictly less than $1$ for sufficiently large $n$. This will show that for a sufficiently large $n$, the element $\sigma_n h\in M(S;X,A^\infty[0,1])$ is a legitimate candidate for the desired $h'\in M(S;X,A^\infty[0,1])$, whence the induced map $\iota: H(S;X,A^\infty[0,1])\map H(S;X,A[0,1])$ must be surjective.

Since $\omega_{H_n}(S)$ is a precompact subset of the Fr{\'e}chet algebra $(A[0,1])^\infty[0,1]$, its closure is compact.  Set $X_n=\omega_{H_n}(S)\subset\C^\infty([0,1],A[0,1])$ and set $X_n(t)$ to be the image of $X_n$ after composition with the evaluation map $\ev^\infty_t:\C^\infty([0,1],A[0,1])\map A[0,1]$ for $t\in[0,1]$. We know that $X_n(0)=\omega_h(S)$ (resp. $X_n(1)=\omega_{\iota\sigma_n h}(S)$), whose spectral radius is $R_0<1$ (resp. $R_1<1$). We write the $\sC$-algebra $A[0,1]$ explicitly as a countable inverse limit $$A[0,1]\cong\ilim_{l\in\NN} B_l,$$ where each $B_l$ is a Banach algebra (actually a $C^*$-algebra), and we denote by $\pi_l:\ilim_l B_l\map B_l$ the canonical projection homomorphisms. Let us introduce the following notation: $$\text{$\pi_l(X_n)= V_l$ and $\pi_l(X_n(t)) = V(t)_l$,}$$ where each $V(t)_l$ is a bounded subset of $B_l$ for every $l\in\NN$ and $\overline{(R_i^{-1}(V(i)_l))^\infty}$ is compact for $i=0,1$. This means that there are natural numbers $N_l$ such that $(R_0^{-1}(V(0)_l))^{N_l}\subset Q_l$, where $Q_l\subset B_l$ denotes the unit ball. By choosing $n$ sufficiently large, we may arrange that $(r^{-1}(V(t)_l))^{N'_l}\subset Q_l$ for all $t\in [0,1]$ and for some $r<1$. This is achieved as follows: due to the Heine--Borel property of Fr{\'e}chet spaces, the compact subset $\overline{(R_0^{-1}(X_n(0)))^\infty} \subset A[0,1]$ is contained in a bounded open convex ball $\cB$ with respect to the standard Fr{\'e}chet metric. We may choose $\cB$ small enough, so that $(r^{-1}(\pi_l(\cB)))^{N'_l}\subset Q_l$ for some $r$ in the range $\textup{max}\{R_0,R_1\}< r<1$. Now using the uniform convergence of $X_n(1)=\omega_{\iota\sigma_n h}(S)$ towards $X_n(0)=\omega_h(S)$ we can ensure that $\overline{(R_1^{-1}(X_n(1)))^\infty}\subset \cB$ for sufficiently large $n$. Observe that
\beq \label{invlimit}
\overline{(r^{-1}(X_n))^\infty}\cong\ilim_l\overline{(r^{-1}(V_l))^\infty}=\ilim_l\overline{(\cup_{p=1}^{N'_l-1}(r^{-1}(V_l))^p)\cup((r^{-1}(V_l))^{N'_l})^\infty}.
\eeq Since $Q_l^2\subset Q_l$, $\overline{Q_l}$ is compact for each $l$ and the inverse limit of compact Hausdorff subsets is again a compact subset, we conclude that $\ilim_l\overline{((r^{-1}(V_l))^{N'_l})^\infty}\subset \ilim_l \overline{Q_l}$ is compact. Since $r^{-1}(V_l)$ is precompact for each $l$, so is $\cup_{p=1}^{N'_l}(r^{-1}(V_l))^p$. It follows that $\ilim_l\overline{\cup_{p=1}^{N'_l}(r^{-1}(V_l))^p}$ is compact, which implies that $\overline{(r^{-1}(X_n))^\infty}$ is compact (see Equation \eqref{invlimit}). Consequently $r^{-1}(\omega_{H_n}(S))=r^{-1}(X_n)$ is power bounded for some $r<1$, whence the spectral radius of $\omega_{H_n}(S)$ is strictly less than $1$.

For injectivity, we need to show that whenever $\iota h_0$ and $\iota h_1$ are homotopic via a homotopy $H\in M(S;X, (A[0,1])^\infty[0,1])$, so are $h_0$ and $h_1$ via a homotopy in $M(S;X,(A^\infty[0,1])^\infty[0,1])$. Adopting a similar strategy as above, we can construct a sequence of smooth homotopies $H'_n \in M(S; X, (A^\infty[0,1])^\infty[0,1])$, such that $\iota H'_n$ converges uniformly to $H$. It follows that $\ev^\infty_0 H'_n$ and $\ev^\infty_1 H'_n$ have the same class in $H(S;X,A^\infty[0,1])$. Once again, for $i=0,1$, there are affine homotopies $G_n: X\map (A^\infty[0,1])^\infty[0,1]$, where $G_n(t)=(1-t)h_i + t\ev^\infty_i H'_n$. It suffices to show that $G_n$ has small curvature for sufficiently large $n$, which would imply that $\ev^\infty_i H'_n$ (for sufficiently large $n$) and $h_i$, for $i=0,1$, have the same class in $H(S;X,A^\infty[0,1])$. Now using arguments as above, we can show that for sufficiently large $n$, the spectral radius of $\omega_{G_n}(S)$ inside $\pct((A^\infty[0,1])^\infty[0,1])$ is strictly less than $1$.
\end{proof}

\begin{thm}
Bivariant local cyclic homology satisfies continuous homotopy invariance on the category of all $\sC$-algebras, i.e., for any two $\sC$-algebras $A,B$ the functors $\HL_*(B,-)$ and $\HL_*(-,B)$ send the evaluation maps $\ev_i:A[0,1]\map A$ for $i=0,1$ to isomorphisms.
\end{thm}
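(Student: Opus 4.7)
The plan is to deduce this theorem as a formal corollary of the preceding proposition combined with the smooth homotopy invariance of bivariant local cyclic homology on ind-Banach algebras (Theorem 5.45 of \cite{MeyCycHom}). The central observation is the factorization $\ev^\infty_i = \ev_i \circ \iota$, where $\iota: A^\infty[0,1] \map A[0,1]$ denotes the canonical Fr{\'e}chet algebra homomorphism from the smooth to the continuous cylinder construction. Both $\ev^\infty_i$ and $\ev_i$ admit the constant inclusion $A\map A^\infty[0,1]$, respectively $A\map A[0,1]$, as a section, so the factorization lives over the identity of $A$.

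First I would invoke smooth homotopy invariance, noting that for any Fr{\'e}chet algebra $A$ the smooth evaluation $\ev^\infty_i: A^\infty[0,1] \map A$ is a split retraction with the constant inclusion serving as a smooth homotopy section. Consequently $\HL_*(B, \ev^\infty_i)$ and $\HL_*(\ev^\infty_i, B)$ are isomorphisms for every Fr{\'e}chet algebra $B$, and in particular for every $\sC$-algebra $B$. Next I would apply the preceding proposition, which asserts that $\iota$ is an $\HL$-equivalence, so that $\HL_*(B, \iota)$ and $\HL_*(\iota, B)$ are isomorphisms in both variables.

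Combining these two facts through the factorization $\ev^\infty_i = \ev_i \circ \iota$ via the 2-out-of-3 property of isomorphisms (applied inside the bivariant theory) yields that $\HL_*(B, \ev_i)$ and $\HL_*(\ev_i, B)$ are isomorphisms, which is precisely continuous homotopy invariance on the category of all $\sC$-algebras. There is no essential obstacle left at this stage: the genuine difficulty, namely identifying the smooth and continuous cylinder constructions in $\HL$ for $\sC$-algebras despite the failure of local multiplicativity, has already been surmounted in the preceding proposition through Lemma \ref{SR} and Meyer's criterion (Theorem \ref{Meyer}).
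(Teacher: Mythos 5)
Your proposal is correct and follows essentially the same route as the paper: factor $\ev^\infty_i = \ev_i\circ\iota$, use smooth homotopy invariance to handle $\ev^\infty_i$, use the preceding proposition to handle $\iota$, and conclude by the two-out-of-three property of isomorphisms. Your added remark that the constant inclusion provides a smooth homotopy section of $\ev^\infty_i$ is a harmless elaboration of the step the paper attributes directly to smooth homotopy invariance.
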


\begin{proof}
Consider the following commutative diagram of Fr{\'e}chet algebras

\beqn
\xymatrix{
& A\\
A^\infty[0,1]\ar[ru]^{\ev^\infty_0}\ar[r]^{\iota}\ar[rd]_{\ev^\infty_1} & A[0,1]\ar[u]_{\ev_0}\ar[d]^{\ev_1}\\
& A,
}
\eeqn where $\ev^\infty_i$ (resp. $\ev_i$) are the evaluation maps from the smooth (resp. continuous) cylinder object. Due to the smooth homotopy invariance of bivariant local cyclic homology, we conclude that $\ev^\infty_i$ are mapped to isomorphisms by $\HL_*(B,-)$ and $\HL_*(-,B)$. The above Proposition shows that $\iota$ in an $\HL$-equivalence, whence it is mapped to an isomorphism by the functors $\HL_*(B,-)$ and $\HL_*(-,B)$. The assertion now follows from the two-out-of-three property of isomorphisms.
\end{proof}

\begin{cor}
We have established that bivariant local cyclic homology satisfies property $(E1)$ of Cuntz \cite{CunGenBivK} on the category of separable $\sC$-algebras.
\end{cor}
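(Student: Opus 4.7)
The plan is to derive the Corollary as a direct specialization of the Theorem just proved. Property $(E1)$ in Cuntz's axiomatic treatment of bivariant theories \cite{CunGenBivK} is, by definition, the continuous homotopy invariance axiom: the bivariant functor must send the evaluation maps $\ev_0,\ev_1 : A[0,1]\map A$ to isomorphisms for every object $A$ of the category under consideration. The preceding Theorem establishes exactly this on the larger category of all $\sC$-algebras, so restricting to the full subcategory of separable $\sC$-algebras is the only work left.

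The auxiliary point worth checking is that the continuous cylinder $A[0,1] = \C([0,1])\prot_{\sC} A$ remains inside the subcategory of separable $\sC$-algebras whenever $A$ is separable, so that the evaluation maps $\ev_i$ are genuinely morphisms in this subcategory. This follows from separability of the commutative $C^*$-algebra $\C([0,1])$ together with the standard fact that both the maximal and the minimal $\sC$-tensor product of two separable $\sC$-algebras is again separable. Hence the evaluation maps $\ev_i:A[0,1]\map A$ lie within the subcategory and are sent to isomorphisms by $\HL_*(B,-)$ and $\HL_*(-,B)$ in view of the Theorem.

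I do not expect any substantive obstacle: the analytic content sits in Lemma \ref{SR} and the preceding Proposition, while the Corollary is a bookkeeping statement that recasts the Theorem in Cuntz's axiomatic language. If anything, the only subtle point is to match precisely the formulation of $(E1)$ used in \cite{CunGenBivK} — whether it is phrased as ``$\ev_0$ and $\ev_1$ induce the same morphism'' or as ``each $\ev_i$ is inverted'' — but the two formulations are equivalent once one also observes that the inclusion $A\hookrightarrow A[0,1]$ of constant functions is a common section of both $\ev_0$ and $\ev_1$, so inversion of $\ev_i$ forces $\ev_0$ and $\ev_1$ to agree in the target category.
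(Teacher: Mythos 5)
Your proposal is correct and matches the paper's (implicit) reasoning: the corollary is stated without proof as an immediate specialization of the preceding Theorem, which is exactly how you derive it. Your added check that $A[0,1]=\C([0,1])\prot_{\sC}A$ stays separable when $A$ is separable is a sensible piece of bookkeeping that the paper leaves tacit.
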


\begin{cor}
Owing to the contractibility of the infinite sphere $S^\infty:= \dlim_n S^{2n+1}$, we deduce $$\text{$\HL_0(\CC,\C(S^\infty)) \simeq \CC\quad$ and $\quad\HL_1(\CC,\C(S^\infty))\simeq \{0\}$.   }$$
\end{cor}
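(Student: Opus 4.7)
The plan is to bootstrap the preceding continuous homotopy invariance theorem applied to a specific contracting homotopy of $S^\infty$. The first step is to recognize $\C(S^\infty)$ as a genuine $\sC$-algebra: since $S^\infty = \dlim_n S^{2n+1}$ is a countable colimit of compact Hausdorff spaces, one has $\C(S^\infty) \cong \ilim_n \C(S^{2n+1})$, a countable inverse limit of unital $C^*$-algebras, which is precisely what the terminology fixed in this paper demands.

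Next I would exploit the contractibility of $S^\infty$ directly. Pick a basepoint $p \in S^\infty$ and a continuous contracting homotopy $H : S^\infty \times [0,1] \to S^\infty$ with $H_0 = \id$ and $H_1 \equiv p$. Pulling back functions produces a continuous unital $*$-homomorphism $\varphi := H^* : \C(S^\infty) \to \C(S^\infty)[0,1]$ satisfying $\ev_0 \circ \varphi = \id_{\C(S^\infty)}$ and $\ev_1 \circ \varphi = \iota_p \circ \chi_p$, where $\chi_p : \C(S^\infty) \to \CC$ is evaluation at $p$ and $\iota_p : \CC \to \C(S^\infty)$ is the unital inclusion of constants.

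Applying the preceding theorem, the two evaluations $\ev_0$ and $\ev_1$ induce the \emph{same} isomorphism under $\HL_*(\CC,-)$. Precomposing with $\HL_*(\CC,\varphi)$ therefore forces $\id_{\C(S^\infty)}$ and $\iota_p \circ \chi_p$ to agree on $\HL_*(\CC, \C(S^\infty))$; since $\chi_p \circ \iota_p = \id_\CC$ holds tautologically, we deduce that $\chi_p$ is an $\HL$-equivalence with quasi-inverse $\iota_p$. Hence $\HL_*(\CC,\C(S^\infty)) \cong \HL_*(\CC,\CC)$, and the well-known computation $\HL_0(\CC,\CC) \cong \CC$ and $\HL_1(\CC,\CC) = 0$ (which already falls out of Meyer's framework in \cite{MeyCycHom}) finishes the proof.

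The only subtlety I anticipate is verifying that the codomain of $H^*$ really is $\C(S^\infty)[0,1] := \C([0,1]) \prot_{\sC} \C(S^\infty)$ rather than just $\C(S^\infty \times [0,1])$ viewed abstractly. Since $S^\infty \times [0,1] \cong \dlim_n(S^{2n+1} \times [0,1])$ is again a countable colimit of compacta and $\C([0,1])$ is nuclear so that the minimal and maximal $\sC$-tensor products coincide, the identification reduces to the compatibility of $\prot_{\sC}$ with the inverse limit defining $\C(S^\infty)$, which is built into the conventions laid out at the beginning of the paper.
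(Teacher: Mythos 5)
Your proposal is correct and is essentially the argument the paper leaves implicit: identify $\C(S^\infty)\cong\ilim_n\C(S^{2n+1})$ as a $\sC$-algebra, pull back a contracting homotopy to a homomorphism into $\C(S^\infty)[0,1]$, and invoke the continuous homotopy invariance theorem to reduce everything to $\HL_*(\CC,\CC)$. The one step you assert without justification---that $\ev_0$ and $\ev_1$ induce the \emph{same} isomorphism under $\HL_*(\CC,-)$---is supplied by noting that both evaluations are split by the constant inclusion $\C(S^\infty)\map\C(S^\infty)[0,1]$, so both induced isomorphisms coincide with the inverse of the map induced by that common section.
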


\begin{rem}
The incompatiblity between the cylinder constructions and the functor $P(-)$ is one of the serious impediments to extending the above arguments beyond $\sC$-algebras. In fact, the continuous homotopy invariance of bivariant local cyclic homology may very well fail to hold for arbitrary pro $C^*$-algebras.
\end{rem}

 \noindent
 {\bf Acknowledgements:} The author is indebted to R. Meyer for his valuable suggestions and corrections to the first draft and to N. C. Phillips for helpful discussions. The author also gratefully acknowledges the support of Deutsche Forschungsgemeinschaft (SFB 878).

\bibliographystyle{abbrv}
\bibliography{/Users/mahanta/Professional/math/MasterBib/bibliography}

\vspace{5mm}
\noindent

\end{document}